\journalname{}
\begin{document}

\title{Adiabatic reduction of a model of stochastic gene expression with jump Markov process}
\thanks{This research was supported by the Ecole Normale Superieure Lyon(ENS Lyon,France), the National Natural Science Foundation of China, and the Natural Sciences and Engineering Research Council of Canada.}

\titlerunning{Adiabatic reduction of jump Markov process}        

\author{Romain Yvinec        \and
       Changjing Zhuge \and
       Jinzhi Lei \and	
      Michael C. Mackey
}

\authorrunning{Yvinec, Zhuge, Lei, Mackey} 

\institute{Romain Yvinec \at
              Universit\'e de Lyon CNRS UMR 5208 Universit\'{e} Lyon 1, Institut Camille Jordan,
43 blvd. du 11 novembre 1918,  F-69622 Villeurbanne Cedex France \\
              Tel.: +33472431189\\
              \email{yvinec@math.univ-lyon1.fr}
\and
	  Changjing Zhuge \at
	      Zhou Pei-Yuan Center for Applied Mathematics, Tsinghua University, Beijing 100084, China
	      \email{zgcj08@mails.tsinghua.edu.cn}
           \and
           Jinzhi Lei \at
              Zhou Pei-Yuan Center for Applied Mathematics, Tsinghua University, Beijing 100084, China
              \email{jzlei@tsinghua.edu.cn}
	  \and
	   Michael C Mackey \at
	      Departments of Physiology, Physics \& Mathematics and Centre for
Applied Mathematics in Bioscience \& Medicine, McGill University, 3655 Promenade Sir William Osler, Montreal, QC, CANADA, H3G 1Y6
\email{michael.mackey@mcgill.ca}
}

\date{Received: date / Accepted: date}

\maketitle

\begin{abstract}
This paper considers adiabatic reduction in a model of stochastic gene expression with bursting transcription considered as a jump Markov process. In this model, the process of gene expression with auto-regulation is described by fast/slow dynamics. The production of mRNA is assumed to follow a compound Poisson process occurring at a rate depending on protein levels (the phenomena called bursting in molecular biology) and the production of protein is a linear function of mRNA numbers. When the dynamics of mRNA is assumed to be a fast process (due to faster mRNA degradation than that of protein) we prove that, with  appropriate scalings in the burst rate, jump size or translational rate, the bursting phenomena can be transmitted to the slow variable. We show that, depending on the scaling,  the reduced equation is either a stochastic differential equation with a jump Poisson process or a deterministic ordinary differential equation. These results are significant  because adiabatic reduction techniques seem to have not been rigorously justified for a stochastic differential system containing  a jump Markov process.  We expect that the results can be generalized to adiabatic methods in more general stochastic hybrid systems.
\keywords{adiabatic reduction \and piecewise deterministic Markov process \and stochastic bursting gene expression \and quasi-steady state assumption \and scaling limit}
 \subclass{92C45 \and 60Fxx \and 92C40 \and 60J25 \and 60J75}

\end{abstract}

\section{Introduction}

\label{intro}

The adiabatic reduction technique is often used to reduce the dimension of a dynamical system when known, or presumptive, fast and slow variables are present.
Adiabatic reduction results for deterministic systems of ordinary differential equations have been available since the work of \cite{Fenichel1979} and \cite{Tikhonov1952}. This technique has been extended to stochastically perturbed systems when the perturbation is a Gaussian distributed white noise, {\it cf.} \cite{Berglund2006}, \cite[Section 6.4]{Gardiner1985}, \cite[Chapter 4, Section 11.1]{Stratonovich:1963}, \cite{Titular:1978} and \cite{Wilemski:1976}. More recently, separation of time scales in discrete pure jump Markov processes were performed, using a master equation formalism \cite{Santillan2011} or a stochastic equation formalism \cite{Kang,Crudu2011}. These papers show that a fast stochastic process can be averaged in the slow time scale, or can induce kicks to the slow variable. However, to the best of our knowledge, this type of approximation has never been extended to the situation in which the (fast) perturbation is a jump Markov process in a piecewise deterministic Markov process.

Jump Markov processes are often used in modelling stochastic gene expressions with explicit bursting in either mRNA or proteins \cite{Friedman2006,Golding2005}, and have been employed as models for genetic networks \cite{Zeisler:2008} and in the context of excitable membranes \cite{Buckwar:2011,Pakdaman:2010,Riedler:2012}. Biologically, the `bursting' of mRNA or protein is simply a process in which there is a production of several molecules within a very short time. In the biological context of modelling stochastic gene expression, explicit models of bursting mRNA and/or protein production have been analyzed recently, either using a discrete  \cite{Shahrezaei2008} or a continuous formalism \cite{Friedman2006,Lei2009,Mackey2011} as even more experimental evidence from single-molecule visualization techniques has revealed the ubiquitous nature of this phenomenon  \cite{Elf2007,Golding2005,Ozbudak2002,Raj2009,Raj2006,Suter2011,Xie2008}.

Traditional models of gene expression are often composed of \textit{at least} two variables (mRNA and protein, and sometimes the promoter state). The use of a reduced one-dimensional model (protein concentration) has been justified so far by an argument concerning the stationary distribution \cite{Shahrezaei2008}.
However, it is clear that the two different models may have the same stationary distribution but very different dynamic behavior (for an example, see \cite{Mackey2011}).  The adiabatic reduction technique has been used in many studies ({\it cf.} \cite{Hasty:2000,Mackey2011}) to simplify the analysis of  stochastic gene expression dynamics, but without a rigorous mathematical justification.

The present paper gives a theoretical justification of the use of adiabatic reduction in a model of auto-regulation gene expression with a jump Markov process in mRNA transcription. We adopt a formalism based on density evolution (Fokker-Planck like) equations. Our results are of importance since they offer a rigorous justification for the use of adiabatic reduction to jump Markov processes. The model and mathematical results are presented in Sections \ref{sec:model}. Proof of the results are given in Section \ref{sec:proof}, with illustrative simulations in Section \ref{sec:ill}.

\section{Model and results}
\label{sec:model}

\subsection{Continuous-state bursting model}

A single round of expression consists of both mRNA transcription and the translation of proteins from mRNA. The mRNA transcription occurs in a burst like fashion depending on the promoter activity. In this study, we assume a simple feedback between the end product (protein) which binds  to its own promoter to regulate the transcription activity.

Let $X$ and $Y$ denote the concentrations of mRNA and protein respectively. A simple mathematical model of a single gene expression with feedback regulation and bursting in transcription is given by
\begin{eqnarray}
\label{eq:gene1}
\dfrac{dX}{dt}&=&-\gamma_1 X + \mathring{N}(h, \varphi(Y)),\\
\label{eq:gene2}
\dfrac{dY}{dt}&=&-\gamma_2 Y+\lambda_2 X.
\end{eqnarray}
Here $\gamma_1$ and $\gamma_2$ are degradation rates for mRNA and proteins respectively, $\lambda_2$ is the translational rate, and $\mathring{N}(h, \varphi(Y))$ describes the transcriptional burst that is assumed to be a compound Poisson \textit{white noise} occurring at a rate $\varphi$ with a non-negative jump size $\Delta X$ distributed with density $h$.

In the model equations \eqref{eq:gene1}-\eqref{eq:gene2}, the stochastic transcriptional burst is characterized by the two functions $\varphi$ and $h$. We always assume these two functions satisfy
\begin{eqnarray}
\label{eq:phi}
\varphi &\in& \mathcal{C}^{\infty}(\mathbb{R}^+,\mathbb{R}^+),\,\,\,\varphi \text{ and }\varphi' \text{ are bounded,}\ {\it i.e.}\ \underline{\varphi}\leq\varphi,\varphi'\leq\overline{\varphi}\\
\label{eq:h}
h&\in& \mathcal{C}^{\infty}(\mathbb{R}^+,\mathbb{R}^+)\,\,\,\text{ and }\int_{0}^\infty x^nh(x) d  x < \infty ,\quad \forall n\geq 1.
\end{eqnarray}
For a general density function $h$, the average burst size is given by
\begin{equation}
\label{eq:b}
b = \int_0^\infty x h(x)d x.
\end{equation}

\begin{remark}\label{rem:hill}
Hill functions are often used to model self-regulation in gene expression, so that $\varphi$ is given by
\begin{equation*}
 \varphi(y)=\varphi_0\frac{1+K y^n}{A + B y^n}
\end{equation*}
where $\varphi_0, A, B, K$ and $n$ are positive parameters  (see \cite{Mackey2011} for more details).

An exponential distribution of the burst jump size is often used in modelling gene expression, in agreement with experimental findings \cite{Xie2008}, so that the density function $h$ is given by
\begin{equation*}
h(\Delta X) = \dfrac{1}{b}e^{-\Delta X/b},
\end{equation*}
where $b$ is the average burst size.

The two functions $\varphi$ and $h$ here satisfy the assumptions \eqref{eq:phi}-\eqref{eq:h}.
\end{remark}

\subsection{Scalings}

The equations \eqref{eq:gene1}-\eqref{eq:gene2} are nonlinear, coupled, and analytically not easy to study. This paper provides an analytical understanding of the adiabatic reduction for \eqref{eq:gene1}-\eqref{eq:gene2} when mRNA degradation is a fast process, {\it i.e.}, $\gamma_1$ is ``large enough'' ($\gamma_1\gg \gamma_2$) but the average protein concentration remains normal. Rapid mRNA degradation has been observed in \textit{E. coli} (and other bacteria), in which mRNA is typically degraded within minutes, whereas most proteins have a lifetime longer than the cell cycle ($\geq 30$ minutes for \textit{E. coli}) \cite{Taniguchi:2010}.

In \eqref{eq:gene1}-\eqref{eq:gene2}, when $\gamma_1$ is large, other parameters have to be adjusted accordingly to maintain a normal level of protein. When there is no feedback regulation to the burst rate, the function $\varphi$ is independent of $Y$ (therefore $\varphi$ is a constant), and thus the average concentrations of mRNA and protein in a stationary state are
\begin{eqnarray}
\label{eq:Xeq}
X_{\mathrm{eq}} := \lim_{t\to\infty}\mathbb{E}[X(t)]&=& \frac{b\varphi}{\gamma_1},\\
\label{eq:Yeq}
Y_{\mathrm{eq}} :=\lim_{t\to\infty}\mathbb{E}[Y(t)]&=& \frac{\lambda_2}{\gamma_2}X_{\mathrm{eq}} = \frac{b\varphi\lambda_2}{\gamma_1\gamma_2}.
\end{eqnarray}
From \eqref{eq:Yeq}, when $\gamma_1$ is large enough ($\gamma_1 \gg \gamma_2$) and $Y_{\mathrm{eq}}$ remains at its normal level, one of the three quantities, $b$, $\varphi$, or $\lambda_2$ must be a large number as well. This observation holds even when there is a feedback regulation of  the burst rate. Thus, in general, we have three possible scalings (as $\gamma_1\to\infty$), each of which is biologically observed:
\textit{\begin{enumerate}
\item[\textnormal{(S1)}] Fast promoter activation/deactivation, so that the rate function $\varphi$ is a large number. In this case, if $\gamma_1\to\infty$, we assume the ratio $\varphi/\gamma_1$ is independent of $\gamma_1$.
\item[\textnormal{(S2)}] Fast transcription, so that the average burst size $b$ is a large number. From \eqref{eq:b}, this scaling indicates that the density function $h$ changes with the parameter $\gamma_1$ in a form $h(\Delta X) = \frac{1}{\gamma_1}h_0(\frac{\Delta X}{\gamma_1})$ with $h_0(\cdot)$ independent of $\gamma_1$.
\item[\textnormal{(S3)}] Fast translation, so that the translational rate $\lambda_2$ is a large number. In this case, if $\gamma_1\to\infty$, we assume the ratio $\lambda_2/\gamma_1$ is independent of $\gamma_1$.
\end{enumerate}}

These scalings are associated with different types of genes that display  different types of kinetics ({\it cf.} \cite{Schwa:2011,Suter2011}), and mathematically lead to different forms of the reduced dynamics. In this paper we determine the  effective reduced equations for equations \eqref{eq:gene1}-\eqref{eq:gene2} for each of the scaling conditions (S1)-(S3). Our main results are summarized below.

First, under the assumption (S1) (\textit{fast promoter activation/deactivation}), equations \eqref{eq:gene1}-\eqref{eq:gene2} can be approximated by a deterministic ordinary differential equation
\begin{equation}
\label{eq:odey0}
\dfrac{dY}{dt}=-\gamma_2 Y+\lambda_2 \psi(Y)
\end{equation}
where
\begin{equation}
\psi(Y) = b\varphi(Y)/\gamma_1.
\end{equation}

Next, under the scaling relations (S2)(\textit{fast transcription}) or (S3)(\textit{fast translation}), equations \eqref{eq:gene1}-\eqref{eq:gene2} are reduced to a single stochastic differential equation
\begin{equation}
\label{eq:gene2_reduce}
\dfrac{dY}{dt}=-\gamma_2 Y + \mathring{N}(\bar{h}(\Delta Y),\varphi(Y))
\end{equation}
containing a jump Markov process, and the density $\bar{h}$ for the newly defined process is given by $h$ through
\begin{equation}
\label{eq:hbar3}
\bar{h}(\Delta Y) = \left (\frac{\lambda_2}{\gamma_1}\right )^{-1}h\left ( \left (\frac{\lambda_2}{\gamma_1}\right )^{-1}\Delta Y\right ).
\end{equation}
In particular, with the scaling (S2), we have
\begin{equation}
\label{eq:hbar2}
\bar{h}(\Delta Y) = \frac{1}{\lambda_2}h_0\left (\frac{\Delta Y}{\lambda_2}\right ).
\end{equation}

These results can be understood with the following simple arguments. When $\gamma_1 \to \infty$, applying  a standard quasi-equilibrium assumption we have
$$\frac{dX}{dt}\approx 0,$$
which yields
\begin{equation}
\label{eq:formal}
 X(t) \approx \frac{1}{\gamma_1}\mathring{N}(h,\varphi(Y)).
\end{equation}
In the case of the scaling (S1), the jumps occur with high frequency and an average burst size $b$. Thus, $X(t)$ approaches the statistical average ($X(t)\approx b \varphi(Y)/\gamma_1$) for a given value $Y$, which gives  \eqref{eq:odey0}.
Under scalings (S2) or (S3), substituting \eqref{eq:formal} into \eqref{eq:gene2} yields
\begin{eqnarray*}
 \frac{dY}{dt} &\approx& -\gamma_2 Y+ \frac{\lambda_2}{\gamma_1}\mathring{N}(h,\varphi(Y))  \\
&\approx& -\gamma_2 Y +\mathring{N}\left (\bar{h}, \varphi(Y)\right).
\end{eqnarray*}
Exact statements for the results and their mathematical proofs are given below.

\subsection{Density evolution equations and main results}
The main results are based on the density evolution equations, and show that the evolution equations obtained from equations \eqref{eq:gene1}-\eqref{eq:gene2} and those from \eqref{eq:odey0} or \eqref{eq:gene2_reduce} are consistent with each other when $\gamma_1 \to +\infty$ under the appropriate scaling. The existence of densities for such processes has been studied in \cite{Mackey2008,Tyran-Kaminska2009}.

Let $u(t,x,y)$ be the density function of $(X(t),Y(t))$ at time $t$ obtained from the solutions of equation \eqref{eq:gene1}-\eqref{eq:gene2}. The evolution of the density $u(t,x,y)$ is governed by ({\it cf.} \cite{Mackey2008})
\begin{equation}
\label{eq:den2}
\begin{array}{rcl}
\displaystyle \dfrac{\partial u(t,x,y)}{\partial t} &=& \displaystyle\dfrac{\partial\ }{\partial x}[\gamma_1 x u(t,x,y)] - \dfrac{\partial\ }{\partial y}[(\lambda_2 x - \gamma_2 y)u(t,x,y)]\\
\displaystyle &&{} + \displaystyle\int_0^x \varphi(y) u(t,z,y) h(x-z)dz - \varphi(y)u(t,x,y)
\end{array}
\end{equation}
when $(t,x,y)\in \mathbb{R}^+\times\mathbb{R}^+\times\mathbb{R}^+$. The corresponding density function of $Y(t)$ is given by
\begin{equation}
u_0(t,y)=\int_0^\infty u(t,x,y) d x.
\end{equation}

In this paper, we prove that when $\gamma_1\to\infty$ the density function $u_0(t,y)$ approaches the density $v(t,y)$ for solutions of either the deterministic equation \eqref{eq:odey0} or the stochastic differential equation \eqref{eq:gene2_reduce} depending on the scaling. Evolution of the density function for equation \eqref{eq:odey0} is given by \cite{Lasota1985}
\begin{equation}
\label{eq:den0}
\dfrac{\partial v(t,y)}{\partial t} = -\dfrac{\partial\ }{\partial y}[-\gamma_2 y v(t,y) + \lambda_2 \psi(y) v(t,y)],
\end{equation}
where
\begin{equation}
\psi(y) = b \varphi(y)/\gamma_1.
\end{equation}
Evolution of the density function for equation \eqref{eq:gene2_reduce} is given by
\begin{equation}
\label{eq:den1}
\dfrac{\partial v(t,y)}{\partial t} = \dfrac{\partial\ }{\partial y}[ \gamma_2 y v(t,y)] + \int_0^y \varphi(z) v(t,z) \bar{h}(y-z) d z - \varphi(y) v(t,y).
\end{equation}
Here $\bar{h}$ is related to $h$ through
\begin{equation}
\label{eq:hS3}
\bar{h}(y) = \frac{\gamma_1}{\lambda_2} h \left (\frac{\gamma_1}{\lambda_2} y \right ).
\end{equation}

We note that when $\varphi$ and $h$ satisfy \eqref{eq:phi}-\eqref{eq:h}, existence of the above densities has been proved in \cite{Mackey2008} and \cite{Tyran-Kaminska2009}. In particular, for a given initial density function
\begin{equation}
\label{eq:ini2}
u(0,x,y) = p(x,y),\quad  0< x,y < +\infty
\end{equation}
that satisfies
\begin{equation}
\label{eq:con1}
p(x,y)\geq 0,\quad \int_0^\infty \int_0^\infty p(x,y) d x d y = 1,
\end{equation}
there is a unique solution $u(t,x,y)$   of \eqref{eq:den2} that satisfies the initial condition \eqref{eq:ini2} and
\begin{equation}
u(t,x,y)\geq 0,\quad \int_0^\infty \int_0^\infty u(t,x,y) d x d y = 1
\end{equation}
for all $t\in\mathbb{R}^+$.

We can rewrite the equations \eqref{eq:den0} and \eqref{eq:den1} in the form
\begin{equation}
\label{eq:wden}
\dfrac{\partial v(t,y)}{\partial t} = \mathcal{T} v(t,y),
\end{equation}
where $\mathcal{T}$ is a linear operator defined by the right hand side of \eqref{eq:den0} or \eqref{eq:den1}.
\begin{definition}
\label{def1}
A smooth function $f:\mathbb{R}^+\to\mathbb{R}^+$ is a \textit{test function} if $f(y)$ has compact support and $f^{(k)}(0) = 0$ for any $k=0,1,2,\cdots$.
An integrable function $v(t,y): \mathbb{R}^+\times \mathbb{R}^+\mapsto \mathbb{R}^+$ is said to be a \textit{weak solution} of \eqref{eq:wden} if for any test function $f(y)$,
\begin{equation}
\int_{0}^\infty \left(\dfrac{\partial v(t,y)}{\partial y}  - \mathcal{T}v(t,y)\right)f (y) d y = 0,\quad \forall t>0.
\end{equation}
\end{definition}

\begin{remark}
It is obvious that any classical solution of \eqref{eq:wden} is also a weak solution.
\end{remark}

The main result of this section, given below, shows that when $\gamma_1$ is large enough, the marginal density of $Y(t)$, $u_0(t,y;\gamma_1)$, as defined below in \eqref{eq:u0}, gives an approximation of a weak solution of \eqref{eq:den0} or \eqref{eq:den1}.

\begin{theorem}
\label{th:m3}
Let $u(0,x,y) = p(x,y)\in\mathcal{C}^{\infty}(\mathbb{R^+}^2)$ and assume that $p(x,y)$  satisfies
\begin{equation}
\label{eq:con2}
\int_0^\infty x^n p(x,y)dx < +\infty,\quad  y > 0,\ n=0,1,2,\cdots.
\end{equation}
For any $\gamma_1 > 0$, let $u(t,x,y;\gamma_1)$ be the associated solution of \eqref{eq:den2}, and define
\begin{equation}
\label{eq:u0}
u_0(t,y;\gamma_1) = \int_0^\infty u(t,x,y;\gamma_1) d x.
\end{equation}
Similarly,
\begin{equation*}
p_0(y) = \int_0^\infty p(x,y) d x.
\end{equation*}
\begin{enumerate}
\item[\textnormal{(1)}] Under the scaling (S1), when $\gamma_1\to\infty$, $u_0(t,y;\gamma_1)$ approaches a weak solution of \eqref{eq:den0} $v(t,y)$ with initial condition $v(0,y)=p_0(y)$.
\item[\textnormal{(2)}] Under the scaling (S2) or (S3), when $\gamma_1\to\infty$, $u_0(t,y;\gamma_1)$ approaches a weak solution of \eqref{eq:den1} $v(t,y)$ with initial condition $v(0,y)=p_0(y)$.
\end{enumerate}
\end{theorem}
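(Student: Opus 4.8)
The plan is to work entirely within the weak formulation of Definition \ref{def1} and to pass to the limit $\gamma_1\to\infty$ in an \emph{exact} identity obtained by testing the full two-dimensional equation \eqref{eq:den2} against a carefully chosen observable. The guiding observation is that, along the deterministic flow underlying \eqref{eq:gene1}--\eqref{eq:gene2}, the comoving quantity $\xi=y+(\lambda_2/\gamma_1)x$ evolves only on the slow time scale: writing $c=\lambda_2/\gamma_1$, one has $\dot\xi=(\lambda_2-c\gamma_1)x-\gamma_2 y=-\gamma_2 y$, so the fast drift $-\gamma_1 x$ is exactly absorbed. This motivates testing against $\phi(x,y)=f(y+cx)$ for a test function $f$ in the sense of Definition \ref{def1}, and it is precisely this choice that will let the jump structure survive into the slow variable.

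First I would record the uniform moment bounds that are actually needed. Multiplying \eqref{eq:den2} by $x$ and integrating gives $\tfrac{d}{dt}\mathbb{E}[X]=-\gamma_1\mathbb{E}[X]+b\int_0^\infty\varphi(y)u_0\,dy$, whence $\mathbb{E}[X(t)]\le\max\{\mathbb{E}[X(0)],\,b\overline{\varphi}/\gamma_1\}$. Under each of (S1)--(S3) the quantity $b\overline{\varphi}/\gamma_1$ stays bounded, and moreover $c\,\mathbb{E}[X]\to0$ (since $c\to0$ in (S1),(S2) while $\mathbb{E}[X]=O(1/\gamma_1)$ in (S3)). Likewise $\tfrac{d}{dt}\mathbb{E}[Y]=-\gamma_2\mathbb{E}[Y]+\lambda_2\mathbb{E}[X]$ with $\lambda_2\mathbb{E}[X]$ uniformly bounded yields $\sup_{[0,T]}\mathbb{E}[Y]<\infty$, giving tightness of the marginals $u_0(t,\cdot;\gamma_1)$ in $y$. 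A notable feature is that the comoving test function requires only these first moments, which sidesteps the blow-up of higher $x$-moments under (S2), where the jump amplitudes are $O(\gamma_1)$.

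Next is the core computation. Testing \eqref{eq:den2} against $\phi(x,y)=f(y+cx)$ and integrating by parts (the boundary terms vanish: $\phi$ is supported in the bounded triangle $\{x,y>0:\,y+cx\le R\}$, and the stated mass conservation forces $\int_0^\infty x\,u(t,x,0)\,dx=0$, hence $u(t,x,0)=0$), the two first-order terms combine as $-(c\gamma_1)\int\! x u\,f'(y+cx)+\lambda_2\int\! x u\,f'(y+cx)=0$ because $c\gamma_1=\lambda_2$. Converting the jump term through the change of variables that turns $h$ into $\bar h$ via \eqref{eq:hS3} yields the exact identity
\[
\frac{d}{dt}\!\int\!\!\int f(y+cx)u\,dx\,dy=-\gamma_2\!\int\!\!\int y\,f'(y+cx)u\,dx\,dy+\!\int\!\!\int \varphi(y)u\Big[\int_0^\infty\!(f(y+cx+w)-f(y+cx))\bar h(w)\,dw\Big]dx\,dy.
\]
Since $f,f'$ are Lipschitz and supported in a bounded $y$-window, replacing every occurrence of $y+cx$ by $y$ costs $O(c\,\mathbb{E}[X])\to0$; by tightness, $\int\!\int f(y+cx)u\to\int f\,v$ and $u_0(t,\cdot;\gamma_1)\to v$ along a subsequence.

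Finally I would read off the two regimes. Under (S2) or (S3) the identity converges to
\[
\frac{d}{dt}\!\int f v\,dy=-\gamma_2\!\int y f' v\,dy+\!\int \varphi(y)v\Big[\int_0^\infty(f(y+w)-f(y))\bar h(w)\,dw\Big]dy,
\]
which is exactly the weak form of \eqref{eq:den1}. Under (S1) one has $c\to0$, so $\bar h$ concentrates at $0$; expanding $\int(f(\cdot+w)-f(\cdot))\bar h(w)\,dw=(cb)f'(\cdot)+O(c^2)$ and using $\varphi=O(\gamma_1)$ with $cb\varphi=\lambda_2\psi=O(1)$ collapses the jump term to the drift $\int\lambda_2\psi v f'\,dy$, giving the weak form of \eqref{eq:den0}. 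I expect the main obstacle to lie not in this algebra but in the compactness and limit-identification step: one must upgrade the moment bounds to tightness in $y$ together with equicontinuity in $t$ (the latter from boundedness of the right-hand side above), extract a limit $v$ with $v(0,\cdot)=p_0$, and justify each interchange of limit and integration by dominated convergence against the moment bounds. A secondary point is uniqueness of the weak solution of \eqref{eq:den0}/\eqref{eq:den1}, needed to upgrade subsequential convergence to convergence of the whole family $u_0(t,\cdot;\gamma_1)$; absent uniqueness, the argument still shows every limit point is a weak solution with initial datum $p_0$.
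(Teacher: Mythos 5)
Your proof is correct in its core computation but takes a genuinely different route from the paper's. The paper works with the full hierarchy of marginal moments $u_n(t,y)=\int_0^\infty x^n u\,dx$: it derives \eqref{eq:tt0}--\eqref{eq:tt2}, solves for $u_1$ by iterating the hierarchy downward, and recognizes the resulting infinite series of $y$-derivatives of $\varphi(y)u_0$ as the convolution against $\bar h$, controlling the remainders with the moment scalings of Proposition \ref{prop:moment}. You instead test \eqref{eq:den2} against the single comoving observable $f(y+\lambda_2 x/\gamma_1)$, which cancels the fast drift exactly and converts $h$ into $\bar h$ in one change of variables, so that the target weak identity appears exactly rather than as a resummed Taylor series. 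Your route needs only the first moment of $X$ (plus $\int x^2h$ for the Taylor step in case (S1)), avoids the unaddressed convergence of the paper's infinite series of derivatives, and makes the origin of \eqref{eq:hS3} transparent; it also surfaces a subtlety both you and the paper pass over, namely that under (S3), where $\lambda_2/\gamma_1$ does not vanish, the initial fast transient shifts the limiting initial datum to the law of $Y_0+(\lambda_2/\gamma_1)X_0$ rather than $p_0$ unless the initial $X$-mass is scaled away. What the paper's choice of test functions buys is that all $y$-boundary terms vanish automatically, whereas your $f(y+cx)$ does not vanish on $\{y=0\}$; your justification there (``mass conservation forces $u(t,x,0)=0$'') is a non sequitur, and you should instead obtain the identity directly from the generator relation \eqref{eq:weak}, exactly as the paper does in proving Proposition \ref{prop:moment}. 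Note also that replacing $y+cx$ by $y$ in the jump term costs $O(\overline{\varphi}\,c^2 b\,\mathbb{E}[X])$ after a second-difference bound, not $O(c\,\mathbb{E}[X])$; this matters only in case (S1), where $\varphi\sim\gamma_1$, and your separate second-order treatment of (S1) does handle it. The compactness, limit-identification and uniqueness steps you flag as outstanding are likewise not carried out in the paper, which, like you, only verifies that the residual in the weak formulation tends to zero.
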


From Definition \ref{def1}, Theorem \ref{th:m3} means that for any test function $f(y)$,
\begin{equation}
\label{eq:lsd}
\lim_{\gamma_1\to\infty}\int_0^\infty \left(\dfrac{\partial u_0(t,y;\gamma_1)}{\partial t} - \mathcal{T} u_0(t,y;\gamma_1)\right) f(y) d y = 0,\quad \forall t> 0.
\end{equation}
In the next section, we prove \eqref{eq:lsd} for the three scalings respectively.

\section{Proof of the main results}
\label{sec:proof}

Before proving Theorem \ref{th:m3}, we first examine the marginal moments under different scalings.

\subsection{Scaling of the marginal moment}\label{section:moment}

\begin{proposition}\label{prop:moment}
Let $(X(t),Y(t))$ be the solutions of \eqref{eq:gene1}-\eqref{eq:gene2}, $\mu_k(t)=\mathbb{E}\big{[}X(t)^k\big{]}$ and $\nu_k(t)=\mathbb{E}\big{[}Y(t)X(t)^k\big{]}$. Suppose $\mu_k(0)<\infty$ and $\nu_k(0)<\infty$, then $\mu_k(t)<\infty$ and $\nu_k(t)<\infty$ for all $t$. Moreover, for any fixed $t>0$:
\begin{enumerate}
\item If the scaling (S1) holds, both $\mu_k(t)$ and $\nu_k(t)$ are uniformly bounded above and below when $\gamma_1$ is large enough.
\item If the scaling (S2) holds, when $\gamma_1$ is large enough, for $k\geq 1$,
\begin{equation}
\mu_k(t)\sim \gamma_1^{k-1},\quad \nu_k(t)\sim\gamma_1^{k-1},
\end{equation}
and $\nu_0(t)$ is uniformly bounded above and below.
\item If the scaling (S3) holds, when $\gamma_1$ is large enough, for $k\geq 1$,
 \begin{equation}
\mu_k(t)\sim \gamma_1^{-1},\quad \nu_k(t)\sim\gamma_1^{-1},
\end{equation}
and $\nu_0(t)$ is uniformly bounded above and below.
\end{enumerate}
\end{proposition}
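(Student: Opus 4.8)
The plan is to turn everything into a hierarchy of linear ordinary differential equations for the moments, obtained by applying the infinitesimal generator of the piecewise-deterministic process $(X(t),Y(t))$ to the monomials $x^k$ and $yx^k$. Between jumps the process obeys $\dot X=-\gamma_1 X$ and $\dot Y=-\gamma_2 Y+\lambda_2 X$, while jumps $X\mapsto X+W$ (with $W$ distributed as $h$, $Y$ unchanged) occur at rate $\varphi(Y)$, so the generator is
\begin{equation*}
\mathcal{A}g(x,y)=-\gamma_1 x\,\partial_x g-(\gamma_2 y-\lambda_2 x)\partial_y g+\varphi(y)\int_0^\infty\big[g(x+w,y)-g(x,y)\big]h(w)\,dw.
\end{equation*}
Writing $m_\ell=\int_0^\infty w^\ell h(w)\,dw$ (finite for all $\ell$ by \eqref{eq:h}, with $m_1=b$) and expanding $(x+w)^k$ binomially, Dynkin's formula yields
\begin{eqnarray*}
\dot\mu_k &=& -\gamma_1 k\,\mu_k+\sum_{j=0}^{k-1}\binom{k}{j}m_{k-j}\,\mathbb{E}\big[\varphi(Y)X^j\big],\\
\dot\nu_k &=& -(\gamma_1 k+\gamma_2)\,\nu_k+\lambda_2\,\mu_{k+1}+\sum_{j=0}^{k-1}\binom{k}{j}m_{k-j}\,\mathbb{E}\big[Y\varphi(Y)X^j\big].
\end{eqnarray*}
The decisive structural facts are that the $\mu$-hierarchy is lower-triangular and closed, that each $\mathbb{E}[\varphi(Y)X^j]$ and $\mathbb{E}[Y\varphi(Y)X^j]$ is sandwiched between $\underline\varphi$ and $\overline\varphi$ times $\mu_j$ resp.\ $\nu_j$ by \eqref{eq:phi} together with $Y\geq0$, and that the order-$k$ protein equation feeds on the one-order-higher moment $\mu_{k+1}$.

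For finiteness I would induct on $k$, handling the $\mu$'s first. Given $\mu_0,\dots,\mu_{k-1}$ finite, the source of the $\mu_k$-equation is dominated by $\overline\varphi\sum_{j<k}\binom{k}{j}m_{k-j}\mu_j<\infty$, so Gronwall's inequality keeps $\mu_k(t)$ finite for all $t$; since this hierarchy never calls on the $\nu$'s, every $\mu_k$ is finite. The $\nu_k$-equation then has a finite source (only the already-controlled $\mu_{k+1}$ and $\nu_0,\dots,\nu_{k-1}$ enter), and a second induction gives finiteness of every $\nu_k$. To legitimize Dynkin's formula before finiteness is known, I would localize with the stopping times $\tau_n=\inf\{t:X(t)\vee Y(t)>n\}$, derive the inequalities for the stopped moments, and pass to the limit by monotone convergence; equivalently one multiplies the density equation \eqref{eq:den2} by a truncated weight and checks that the boundary terms vanish at $x=0$ and are killed at $x=\infty$ by \eqref{eq:con2}.

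For the scaling statements I would use the Duhamel representation
\begin{equation*}
\mu_k(t)=e^{-\gamma_1 k t}\mu_k(0)+\int_0^t e^{-\gamma_1 k(t-s)}S_k(s)\,ds,
\end{equation*}
with $S_k$ the source of the $\mu_k$-equation, and the analogous formula for $\nu_k$. The kernel $\gamma_1 k\,e^{-\gamma_1 k(t-s)}$ is an approximate identity as $\gamma_1\to\infty$, so for fixed $t>0$ the initial term vanishes and $\mu_k(t)\approx S_k(t)/(\gamma_1 k)$. Inserting the parameter orders ($\varphi\sim\gamma_1$ in (S1); $m_\ell\sim\gamma_1^\ell$, hence $b\sim\gamma_1$, in (S2); $\lambda_2\sim\gamma_1$ in (S3)) and reading off the dominant source, which is always the $j=0$ term $m_k\,\mathbb{E}[\varphi(Y)]$ (and, for $\nu_k$, also $\lambda_2\mu_{k+1}$ and $m_k\mathbb{E}[Y\varphi(Y)]$), produces the asserted exponents. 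Upper bounds follow by comparison using the inductively assumed orders; the matching lower bounds follow because $\varphi\geq\underline\varphi>0$ forces $S_k(s)\geq m_k\underline\varphi>0$, whence $\mu_k(t)\geq m_k\underline\varphi(1-e^{-\gamma_1 kt})/(\gamma_1 k)$, of the correct order for large $\gamma_1$. Running the induction on $k$ through $\mu_k$, then $\nu_0$ (from $\dot\nu_0=-\gamma_2\nu_0+\lambda_2\mu_1$), then $\nu_k$, yields all the estimates.

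The main obstacle is the coupling of the $\nu_k$-equation to the higher moment $\mu_{k+1}$, which prevents the protein-weighted hierarchy from closing on itself; the resolution is exactly the order of operations above, namely exhausting the self-contained $\mu$-hierarchy first and feeding its bounds into the $\nu$-equations. The second delicate point is obtaining the lower bounds \emph{uniformly} in $\gamma_1$ at a fixed $t>0$: near $t=0$ the moments may be small, so one cannot argue through a stationary balance, and I would instead rely on the explicit Duhamel lower bound, in which the strictly positive source $m_k\underline\varphi$ guarantees a contribution of the correct order once $\gamma_1 t$ is large.
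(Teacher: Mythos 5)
Your proposal is correct and follows essentially the same route as the paper: apply the generator to $x^k$ and $yx^k$, expand the jump term binomially, sandwich the source between $\underline\varphi$ and $\overline\varphi$ times lower-order moments, and induct up the (self-contained) $\mu$-hierarchy before feeding into $\nu_0$ and then $\nu_k$, with Gronwall/Duhamel giving the $\gamma_1$-orders under each scaling. The only additions beyond the paper's argument are the stopping-time localization justifying Dynkin's formula and the explicit Duhamel lower bound for uniformity at fixed $t>0$, both of which are compatible refinements rather than a different approach.
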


\begin{proof}
For the two-dimensional stochastic differential equation \eqref{eq:gene1}-\eqref{eq:gene2}, the associated infinitesimal generator $\mathcal{A}$ is defined as \cite[Theorem 5.5]{Davis1984}
\begin{eqnarray}
\label{generator2d}
\mathcal{A}g(x,y) &=& -\gamma_1 x\dfrac{\partial g}{\partial x}+(\lambda_2 x-\gamma_2 y)\dfrac{\partial g}{\partial y}  \\
&&{} + \varphi(y)\Bigg{(}\int_x^{\infty}h(z-x)g(z,y)dz-g(x,y)\Bigg{)} \nonumber
\end{eqnarray}
for any $g\in \mathcal{C}^1({\mathbb{R}^+}\times \mathbb{R}^+)$.
The operator $\mathcal{A}$ is the adjoint of the operator acting on the right hand side of the evolution equation of the density \eqref{eq:den2}.
Moreover, for any $g\in \mathcal{C}^1(\mathbb{R}^+\times\mathbb{R}^+)$, we have
\begin{equation}
\label{eq:weak}
\frac{d\ }{dt}\mathbb{E}g(X_t,Y_t)=\mathbb{E}\mathcal{A}(g(X_t,Y_t)),
\end{equation}
provided both terms on the right hand side of \eqref{generator2d} are finite. The proposition is proved through calculations of \eqref{eq:weak}.

To obtain estimations for $\mu_k$, a straightforward calculation from  \eqref{generator2d} yields
\begin{eqnarray*}
 \mathcal{A}\,x^k &=& -\gamma_1 k x^k+\varphi(y)\Bigg{(}\int_x^{\infty}h(z-x)(z-x+x)^kdz-x^k\Bigg{)}\\
&=& -\gamma_1 kx^k+\varphi(y)\sum_{i=0}^{k-1}{k\choose i}x^i\int_x^{\infty}h(z-x)(z-x)^{k-i}dz\\
&=& -\gamma_1 kx^k+\varphi(y)\sum_{i=0}^{k-1}{k\choose i}x^i\mathbb{E}^{k-i} h,\\
\end{eqnarray*}
where
$$\mathbb{E}^jh = \int_0^\infty x^j h(x) d x.$$
Thus, \eqref{eq:weak} yields
\begin{equation}
\dfrac{d \mu_k(t)}{d t} = - \gamma_1 k \mu_k(t) +  \sum_{i=0}^{k-1}{k \choose i}\mathbb{E} \left[\varphi(Y_t) X(t)^i\right] \mathbb{E}^{k-1} h.
\end{equation}
We then obtain, with the assumption \eqref{eq:phi},
\begin{equation}
\label{ineq:moment}
\underline{\varphi}\sum_{i=0}^{k-1}{k\choose i}\mu_i(t)\mathbb{E}^{k-i} h \leq \dot{\mu_k}(t) + \gamma_1k\mu_k(t) \leq \overline{\varphi}\sum_{i=0}^{k-1}{k\choose i}\mu_i(t)\mathbb{E}^{k-i} h.
\end{equation}

Now, we can obtain estimations of $\mu_k$ for different scalings from \eqref{ineq:moment}

1. Assume the scaling (S1) so that both $\overline{\varphi}/\gamma_1$ and $\underline{\varphi}/\gamma_1$ are independent of $\gamma_1$ when $\gamma_1$ is large enough. Applying Gronwall's inequality to equation \eqref{ineq:moment} with $k=1$ yields, for all $t>0$,
$$
\frac{\underline{\varphi}\,b}{\gamma_1} + \left[\mu_1(0) - \frac{\underline{\varphi}\,b}{\gamma_1}\right] e^{-\gamma_1 t}\leq\mu_1(t)\leq\frac{\overline{\varphi}\,b}{\gamma_1} + \left[\mu_1(0) - \frac{\overline{\varphi}\,b}{\gamma_1}\right] e^{-\gamma_1 t}.
$$
Thus, $\mu_1(t)$ is uniformly bounded above and below when $\gamma_1$ is large enough.

Iteratively, for all $t>0$ and $k>1$, there are constants $\bar{c}_k, \underline{c}_k>0$ independent of $\gamma_1$ such that
$$
\frac{\underline{\varphi}\,\underline{c}_k}{k\gamma_1} + \left[\mu_k(0) - \frac{\underline{\varphi}\,\underline{c}_k}{k \gamma_1}\right]e^{- k\gamma_1 t}\leq\mu_k(t)\leq \frac{\overline{\varphi}\,\overline{c}_k}{k\gamma_1} + \left[\mu_k(0) - \frac{\overline{\varphi}\,\overline{c}_k}{k \gamma_1}\right]e^{- k\gamma_1 t},
$$
and hence $\mu_k(t)$ is uniformly bounded above and below when $\gamma_1$ is large enough.

2. Assume the scaling (S2) so that $\mathbb{E}^{k-i}h\sim \gamma_1^{k-i}$ when $\gamma_1$ is large enough. We note $\mu_0(t) = 1$, and therefore inductively, for any $t$ and $k\geq 1$,
$$
\frac{\underline{\varphi}\,\mathbb{E}^kh}{k\gamma_1} + O(\gamma_1^{k-2})\leq\mu_k(t)\leq\frac{\overline{\varphi}\,\mathbb{E}^kh}{k\gamma_1} + O(\gamma_1^{k-2}).
$$
Thus, we have $\mu_k(t)\sim\gamma_1^{k-1}$ when $\gamma_1$ is large enough.

3. Assume the scaling (S3) so that $\lambda_2/\gamma_1$ is independent of $\gamma_1$ when $\gamma_1$ is large enough. Calculations similar to those in case (S1) gives $\mu_k(t)\sim \gamma_1^{-1}$.

Analogous results for $\nu_k(t)$ are obtained with similar calculations with \linebreak $g(x,y) = x^k y$ in \eqref{generator2d}. Namely, we have
\begin{eqnarray*}
 \mathcal{A}\,x^ky &=& -(\gamma_1 k+\gamma_2) x^ky+\lambda_2 x^{k+1}+\varphi(y)\sum_{i=0}^{k-1}{k\choose i}x^iy\mathbb{E}^{k-i} h.
\end{eqnarray*}
Thus, when $k=0$, we have
\begin{equation*}
 \dot{\nu_0}=-\gamma_2 \nu_0+\lambda_2 \mu_1,
\end{equation*}
and for $k\geq 1$,
\begin{eqnarray*}
 &&\quad-(\gamma_1k+\gamma_2)\nu_k(t)+\lambda_2\mu_{k+1}+\underline{\varphi}\sum_{i=0}^{k-1}{k\choose i}\nu_i(t)\mathbb{E}^{k-i} h\\
 &&\leq\ \dot{\nu_k}(t)\ \leq\ -(\gamma_1k+\gamma_2)\nu_k(t)+\lambda_2\mu_{k+1}+\overline{\varphi}\sum_{i=0}^{k-1}{k\choose i}\nu_i(t)\mathbb{E}^{k-i} h.
\end{eqnarray*}
Then $\nu_0$ is uniformly bounded for each scaling (S1), (S2), and (S3). Then, iteratively using the inequalities for $\dot{\nu_k}$, the scaling of $\mu_{k+1}$ and Gronwall's inequality yields the desired result for each scaling.
\end{proof}

\begin{remark}
\label{rem3}
Define the marginal moments
\begin{equation}
u_k(t,y) = \int_0^\infty x^k u(t,x,y) d x,
\end{equation}
then
$$\mu_k(t) = \int_0^\infty u_k(t,y) d y.$$
Hence the integrals $\int_0^\infty u_k(t,y)d y$ satisfy the same scaling as $\mu_k(t)$ when $\gamma_1\to\infty$.
\end{remark}

\begin{remark}
\label{rem4}
From \eqref{ineq:moment}, when $\gamma_1\to\infty$ the moments $\dot{\mu}_k(t)$ have the same scaling as $\mu_k(t)$. Moreover, the same scalings are valid for the integrals $\displaystyle\int_0^\infty \frac{\partial u_k(t,y)}{\partial t} d y$.
\end{remark}

\subsection{Proof of Theorem \ref{th:m3}}
\begin{proof} Throughout the proof, we omit $\gamma_1$ in the solution $u(t,x,y;\gamma_1)$ and in the marginal density $u_0(t,y;\gamma_1)$, and keep in mind that they are dependent on the parameter $\gamma_1$ through equation \eqref{eq:den2}.

First, from Section \ref{section:moment} and \eqref{eq:con2}, the marginal moments
\begin{equation}
u_n(t,y)=\int_0^\infty x^n u(t,x,y) d x,
\end{equation}
are well defined for $t>0$, $y>0$ and $n\geq 0$.  Hence
\begin{equation}
\label{eq:xx}
\begin{aligned}
\lim_{x\to\infty} x^n u(t,x,y) &= 0,\quad \forall t, y, n>0. \\
\lim_{x\to 0} x^n u(t,x,y) &= 0,\quad \forall t, y, n\geq 1.
\end{aligned}
\end{equation}
From \eqref{eq:den2}, we multiply by $x^n$ and integrate on both sides. By \eqref{eq:xx}, we have
\begin{equation}
\begin{array}{rcl}
\displaystyle\frac{\partial u_n}{\partial t} &=& \displaystyle- n \gamma_1 u_n - \lambda_2 \frac{\partial u_{n+1} }{\partial y}  + \gamma_2 \frac{\partial (y u_n) }{\partial y} \\
&&\displaystyle{} + \int_0^\infty \int_0^x \varphi(y) x^n u(t, z, y) h(x-z) d z dx - \varphi(y) u_n.
\end{array}
\end{equation}
Since
$$\int_0^\infty \int_0^x \varphi(y) x^n u(t,z,y) h(x-z) d z dx= \sum_{j=0}^n {n \choose j} \varphi(y) u_{n-j} \mathbb{E}^j h,$$
we have
\begin{equation}
\frac{\partial u_n}{\partial t} = - n \gamma_1 u_n - \lambda_2 \frac{\partial u_{n+1} }{\partial y}  + \gamma_2 \frac{\partial (y u_n) }{\partial y}   + \varphi(y)\sum_{j=1}^n {n \choose j} u_{n-j} \mathbb{E}^j h.
\end{equation}

In particular, when $n=0$,
\begin{equation}
\label{eq:tt0}
\frac{\partial u_0}{\partial t} =  - \lambda_2 \frac{\partial u_{1} }{\partial y}  + \gamma_2 \frac{\partial (y u_0) }{\partial y},
\end{equation}
and when $n\geq 1$,
\begin{equation}
\label{eq:tt1}
\frac{1}{\gamma_1}\frac{\partial u_n}{\partial t} = - n  u_n - \frac{\lambda_2}{\gamma_1} \frac{\partial u_{n+1} }{\partial y}  + \frac{\gamma_2}{\gamma_1} \frac{\partial (y u_n) }{\partial y}   + \frac{1}{\gamma_1}\varphi(y)\sum_{j=1}^n {n \choose j} u_{n-j} \mathbb{E}^j h.
\end{equation}
Thus, for any $n\geq 1$,
\begin{eqnarray}
\label{eq:tt2}
u_n &=& - \frac{\lambda_2}{n\gamma_1} \frac{\partial u_{n+1} }{\partial y}  + \frac{\gamma_2}{n\gamma_1} \frac{\partial (y u_n) }{\partial y}\\
&&{}  + \frac{1}{n\gamma_1}\varphi(y)\sum_{j=1}^n {n \choose j} u_{n-j} \mathbb{E}^j h -\frac{1}{n\gamma_1}\dfrac{\partial u_n}{\partial t}.\nonumber
\end{eqnarray}

Now, we are ready to prove the results for the three scalings by iteratively  calculating $u_1$ from \eqref{eq:tt2}.

For the scaling (S1) so $\varphi(y) \sim \gamma_1$, and (here $b = \mathbb{E}h$)
\begin{equation}
\label{eq:tt3}
u_1 = \frac{b \varphi(y)}{\gamma_1} u_0 + \frac{1}{\gamma_1}\left[\frac{\partial\ }{\partial y}(\gamma_2 y u_1 - \lambda_2 u_2) - \frac{\partial u_1}{\partial t}\right].
\end{equation}
Substituting \eqref{eq:tt3} into \eqref{eq:tt0},  we obtain
\begin{equation}
\frac{\partial u_0}{\partial t} = \frac{\partial\ }{\partial y}[\gamma_2 y u_0 - \lambda_2 \psi(y) u_0] - \frac{\lambda_2}{\gamma_1} \frac{\partial\ }{\partial y}\left[\frac{\partial\ }{\partial y}(\gamma_2 y u_1 - \lambda_2 u_2) - \frac{\partial u_1}{\partial t}\right],
\end{equation}
where $\psi(y) = b \varphi(y)/\gamma_1$. Now, we only need to show that for any test function $f(y)$,
\begin{equation}
\label{eq:f0}
\lim_{\gamma_1\to\infty} \dfrac{\lambda_2}{\gamma_1} \int_0^\infty f(y) \frac{\partial\ }{\partial y}\left[\frac{\partial\ }{\partial y}(\gamma_2 y u_1 - \lambda_2 u_2) - \frac{\partial u_1}{\partial t}\right] d y = 0,\quad \forall t > 0.
\end{equation}
We note that the integral
\begin{eqnarray*}
\int_0^\infty f(y) \frac{\partial\ }{\partial y}\left[\frac{\partial\ }{\partial y}(\gamma_2 y u_1 - \lambda_2 u_2) - \frac{\partial u_1}{\partial t}\right] d y &=& - \int_0^\infty f'(y) \frac{\partial u_1}{\partial t} d y\\
&&{} + \int_0^\infty f''(y) (\gamma_2 y u_1 - \lambda_2 u_2) d y
\end{eqnarray*}
is uniformly bounded when $\gamma_1$ is large enough, \eqref{eq:f0} is straightforward from the Remarks \ref{rem3} and \ref{rem4}. Thus, we conclude that $u_0(t,y)$ approaches a weak solution of \eqref{eq:den0} and (1) of Theorem \ref{th:m3} is proved.

For the scaling (S2) so that $\mathbb{E}^j h \sim \gamma_1^j$ when $\gamma_1\to\infty$, let
\begin{equation}
\label{eq:bj}
b_j = \gamma_1^{-j}\mathbb{E}^j h,\quad (j=0,1,\cdots)
\end{equation}
which are independent of $\gamma_1$ when $\gamma_1\to\infty$. Hence, from \eqref{eq:tt2} and Proposition \ref{prop:moment}, we have
\begin{eqnarray*}
\gamma_1^{-(n-1)}u_n &=& - \frac{\lambda_2}{n}\dfrac{\partial (\gamma_1^{-n} u_{n+1})}{\partial y} + \frac{\gamma_2}{n\gamma_1}\frac{\partial (y \gamma_1^{-(n-1)}u_n)}{\partial y} + \frac{1}{n}\varphi(y) u_0 b_n\\
&&{} + \frac{1}{n\gamma_1}\varphi(y)\sum_{j=1}^{n-1}{n\choose j} \gamma_1^{-(n-j-1)}u_{n-j}b_j - \frac{1}{n \gamma_1} \frac{\partial (\gamma_1^{-(n-1)}u_n)}{\partial t}\\
&=& \frac{1}{n}b_n \varphi(y) u_0 - \frac{\lambda_2}{n}\dfrac{\partial (\gamma_1^{-n} u_{n+1})}{\partial y} + \frac{1}{n\gamma_1}C_n(t,y),
\end{eqnarray*}
where
$$C_n(t,y) = \gamma_2 \frac{\partial (y \gamma_1^{-(n-1)}u_n)}{\partial y} + \varphi(y)\sum_{j=1}^{n-1}{n\choose j}\gamma_1^{-(n-j-1)}u_{n-j}b_j - \frac{\partial (\gamma_1^{-(n-1)} u_n)}{\partial t}.$$
Therefore,
\begin{eqnarray*}
u_1 &=& b_1 \varphi(y)u_0 - \lambda_2 \frac{\partial\ }{\partial y} [\gamma_1^{-1} u_2] + \frac{1}{\gamma_1}C_1(t,y)\\
&=&b_1 \varphi(y)u_0 - \lambda_2 \frac{\partial\ }{\partial y} \left[\frac{1}{2}b_2 \varphi(y) u_0 - \frac{\lambda_2}{2}\frac{\partial (\gamma_1^{-2}u_3)}{\partial y} + \frac{1}{2\gamma_1}C_2(t,y)\right] + \frac{1}{\gamma_1}C_1(t,y)\\
&=& b_1 \varphi(y)u_0 - b_2 \frac{\lambda_2}{2!} \frac{\partial}{\partial y}(\varphi(y)u_0)  + \frac{\lambda_2^2}{2!}\frac{\partial^2}{\partial y^2}\left[\frac{1}{3}b_3 \varphi(y)u_0 - \frac{\lambda_2}{3}\frac{\partial (\gamma_1^{-3}u_4)}{\partial y}+\frac{1}{3\gamma_1}C_3(t,y)\right]\\
&&{} + \frac{1}{\gamma_1}C_1(t,y) - \frac{\lambda_2}{2! \gamma_1} \frac{\partial\ }{\partial y}C_2(t,y)\\
&&\cdots\cdots\cdots\cdots\\
&=&\sum_{k=1}^\infty \frac{(-\lambda_2)^{k-1}}{k!} b_k\frac{\partial^{k-1}\ }{\partial y^{k-1}}(\varphi(y) u_0) + \frac{1}{\gamma_1}\sum_{k=1}^{\infty}\frac{(-\lambda_2)^{k-1}}{k!} \frac{\partial^{k-1}\ }{\partial y^{k-1}} C_k(t,y).
\end{eqnarray*}
Thus, denote
$$C(t,y) = -\lambda_2 \frac{\partial\ }{\partial y}\left[\sum_{k=1}^{\infty}\frac{(-\lambda_2)^{k-1}}{k!} \frac{\partial^{k-1}\ }{\partial y^{k-1}} C_k(t,y)\right] =  \sum_{k=1}^\infty\frac{(-\lambda_2)^k}{k!}\frac{\partial^k\ }{\partial y^k}C_k(t,y)$$
and from \eqref{eq:bj}, we have
\begin{eqnarray}
-\lambda_2 \frac{\partial u_1}{\partial y}&=& \sum_{k=1}^\infty \frac{(-\lambda_2)^{k}}{k!} (\gamma_1^{-k} \mathbb{E}^{k}h) \frac{\partial^{k}\ }{\partial y^{k}} (\varphi(y)u_0) + \frac{1}{\gamma_1}C(t,y)\nonumber\\
&=& \sum_{k=1}^\infty \frac{1}{k!} \left (-\frac{\lambda_2}{\gamma_1}\right )^k \left (\int_0^\infty x^{k} h(x) dx\right ) \frac{\partial^k}{\partial y^k}(\varphi(y) u_0)+ \frac{1}{\gamma_1}C(t,y)\nonumber\\
&=&\int_0^\infty \bar{h}(x) \left[\sum_{k=1}^\infty \frac{1}{k!} (-x)^{k} \frac{\partial^k}{\partial y^k}(\varphi(y) u_0)\right] dx+ \frac{1}{\gamma_1}C(t,y)\nonumber\\
&=&\int_0^\infty \bar{h}(x) (\varphi(y-x)u_0(t,y-x) - \varphi(y)u_0(t,y)) d x+ \frac{1}{\gamma_1}C(t,y)\nonumber\\
&=&\int_0^\infty \bar{h}(x)\varphi(y-x)u_0(t,y-x) d x - \varphi(y) u_0(t,y)+ \frac{1}{\gamma_1}C(t,y)\nonumber\\
&=&-\int_y^{-\infty} \bar{h}(y-z)\varphi(z) u_0(t,z) d z - \varphi(y) u_0(t,y)+ \frac{1}{\gamma_1}C(t,y)\nonumber\\
\label{eq:u1s2}
&=&\int_0^y \bar{h}(y-z) \varphi(z) u_0(t,z) d z - \varphi(y)u_0(t,y) + \frac{1}{\gamma_1}C(t,y).
\end{eqnarray}
Here we note $\varphi(z) = 0$ when $z<0$.

For any test function $f(y)$, similar to the argument in the scaling (S1), the integral
$$\int_0^\infty C(t,y) f(y) d y$$
is uniformly bounded when $\gamma_1$ is large enough, and hence
$$\lim_{\gamma_1\to\infty}\frac{1}{\gamma_1}\int_0^\infty C(t,y)f (y) d y = 0,\forall t>0.$$
Therefore, from \eqref{eq:tt0} and \eqref{eq:u1s2}, when $\gamma_1\to \infty$, $u_0$ approaches a weak solution of \eqref{eq:den1}, and (2) in Theorem \ref{th:m3} is proved.

Now, we consider the scaling (S3) so $\lambda_2/\gamma_1$ is independent of $\gamma_1$ when $\gamma_1\to\infty$. From \eqref{eq:tt2} and Proposition \ref{prop:moment}, we have
\begin{eqnarray*}
u_n &=&-\frac{1}{n}\frac{\lambda_2}{\gamma_1} \frac{\partial u_{n+1}}{\partial y} + \frac{\gamma_2}{n \gamma_1} \frac{\partial (y u_n)}{\partial y} + \frac{1}{n\gamma_1} \varphi(y) u_0 \mathbb{E}^n h   \\
&& + \frac{1}{n \gamma_1} \varphi(y) \sum_{j=1}^{n-1} {n \choose j} u_{n-j} \mathbb{E}^j h - \frac{1}{n \gamma_1} \frac{\partial u_n}{\partial t} \\
&=& \frac{1}{n \gamma_1} \varphi(y) u_0 \mathbb{E}^n h - \frac{1}{n} \frac{\lambda_2}{\gamma_1} \frac{\partial u_{n+1}}{\partial y} + \frac{1}{n\gamma_1}R_n(t,y),
\end{eqnarray*}
where
$$R_n(t,y) = \gamma_2 \frac{\partial (y u_n)}{\partial y} + \varphi(y) \sum_{j=1}^{n-1}{n \choose j} u_{n-j} \mathbb{E}^j h - \frac{\partial u_n}{\partial t}.$$
Therefore,
\begin{eqnarray*}
u_1&=& \frac{1}{\gamma_1}\varphi(y) u_0 \mathbb{E}^1 h -  \frac{\lambda_2}{\gamma_1}\frac{\partial}{\partial y} u_2 + \frac{1}{\gamma_1}R_1(t,y)\\
&=& \frac{1}{\gamma_1}\varphi(y) u_0 \mathbb{E}^1 h - \frac{\lambda_2}{\gamma_1} \frac{\partial}{\partial y}\left[\frac{1}{2\gamma_1}\varphi(y)u_0\mathbb{E}^2 h - \frac{1}{2}\frac{\lambda_2}{\gamma_1}\frac{\partial}{\partial y} u_3 + \frac{1}{2\gamma_1}R_2(t,y)\right]\\
&&{} + \frac{1}{\gamma_1}R_1(t,y)\\
&=& \frac{1}{\gamma_1} \varphi(y)u_0\mathbb{E}^1 h - \frac{1}{2!} \frac{\lambda_2}{\gamma_1^2}\mathbb{E}^2 h \frac{\partial}{\partial y}[\varphi(y) u_0]\\
&&{} + \frac{1}{2!} (\frac{\lambda_2}{\gamma_1})^2 \frac{\partial}{\partial y}\left[\frac{1}{3\gamma_1}\varphi(y) u_0 \mathbb{E}^3 h - \frac{1}{3}\frac{\lambda_2}{\gamma_1}\frac{\partial}{\partial u_4}\right]\\
&&{} + \frac{1}{\gamma_1}\sum_{k=1}^3\frac{1}{k!}(-\frac{\lambda_2}{\gamma_1})^{k-1} \frac{\partial^{k-1}}{\partial y^{k-1}}R_k(t,y)\\
&&\cdots\cdots\cdots\\
&=&-\frac{1}{\lambda_2}\sum_{k=1}^\infty \frac{1}{k!} (-\frac{\lambda_2}{\gamma_1})^k \mathbb{E}^k h \frac{\partial^{k-1}}{\partial y^{k-1}}[\varphi(y)u_0]\\
&&{} + \frac{1}{\gamma_1}\sum_{k=1}^\infty\frac{1}{k!}(-\frac{\lambda_2}{\gamma_1})^{k-1} \frac{\partial^{k-1}}{\partial y^{k-1}}R_k(t,y).
\end{eqnarray*}
Denote
$$R(t,y) = \sum_{k=1}^\infty \frac{1}{k!} (-\frac{\lambda_2}{\gamma_1})^k \frac{\partial^k\ }{\partial y^k}R_k(t,y),$$
and in a manner similar to the above argument, we have
\begin{eqnarray}
-\lambda_2 \frac{\partial u_1}{\partial y}&=& \sum_{k=1}^\infty\frac{1}{k!}(-\frac{\lambda_2}{\gamma_1})^k \mathbb{E}^k h \frac{\partial^k}{\partial y^k} [\varphi(y) u_0] + R(t,y)\nonumber\\
\label{eq:u1s3}
&=&\int_0^y \bar{h}(y-z)\varphi(z)u_0(t,z)dz - \varphi(y)u_0(t,y) + R(t,y).
\end{eqnarray}
Finally, we note $\mu_k(t)\sim \gamma_1^{-1}$ in the scaling (S3), hence for any test function $f(y)$,
$$\lim_{\gamma_1\to\infty}\int_0^\infty R(t,y) f(y) = 0.$$
Thus, from \eqref{eq:tt0} and \eqref{eq:u1s3}, when $\gamma_1\to \infty$, $u_0$ approaches to a weak solution of \eqref{eq:den1}, and (3) in Theorem \ref{th:m3} is proved.
\end{proof}

\section{Illustration}
\label{sec:ill}

We performed numerical simulations on \eqref{eq:gene1}-\eqref{eq:gene2} to illustrate the results in previous sections. In our simulations, we took parameter values so that $\gamma_1$ increases with the scaling (S2). As the intensity of the jumps is bounded, we used an accept/reject numerical scheme to simulate jump times, and used the exact solution of \eqref{eq:gene1}-\eqref{eq:gene2} between the jumps (the equations are linear between jumps).
For a given set of parameters, we simulate a trajectory for a sufficiently long time (a bound on the convergence rate can be obtained by the coupling method, see \cite{Bardet2011}) so that the stochastic process reaches its stationary state. We then computed its equilibrium density (as well as the first and second moments) by sampling a large number of values ($10^6$) of the stochastic process at random times. Finally, we compare the marginal density for $Y(t)$ with the analytic steady-state solution of the one-dimensional equation \eqref{eq:den1}. To quantify the differences, we used  the $L^1,L^2$ and $L^{\infty}$ norms (the parameter values are taken such that the asymptotic density is bounded).

Results are shown in Figures \ref{fig:1}-\ref{fig:1_err}. First, Figure \ref{fig:1} shows that as $\gamma_1$ increased, the marginal steady-state distribution approaches  the analytical limit. Differences between the distributions are quantified in Figure \ref{fig:1_err}, where we show norm differences between the numerical and analytic distributions. We also show the behaviour of the moments. Notice that the marginal moment of $Y$ approaches the analytic moment of the one-dimensional stochastic process as $\gamma_1\to\infty$. Also, we verify the predicted behaviour of the moment involving the first variable $X$, $\mu_k$ and $\nu_k$ for $k=1,2$, as in Proposition \ref{prop:moment}. Results show good agreement with our theoretical predictions.

\begin{figure}
\centering
  \includegraphics[width=\columnwidth]{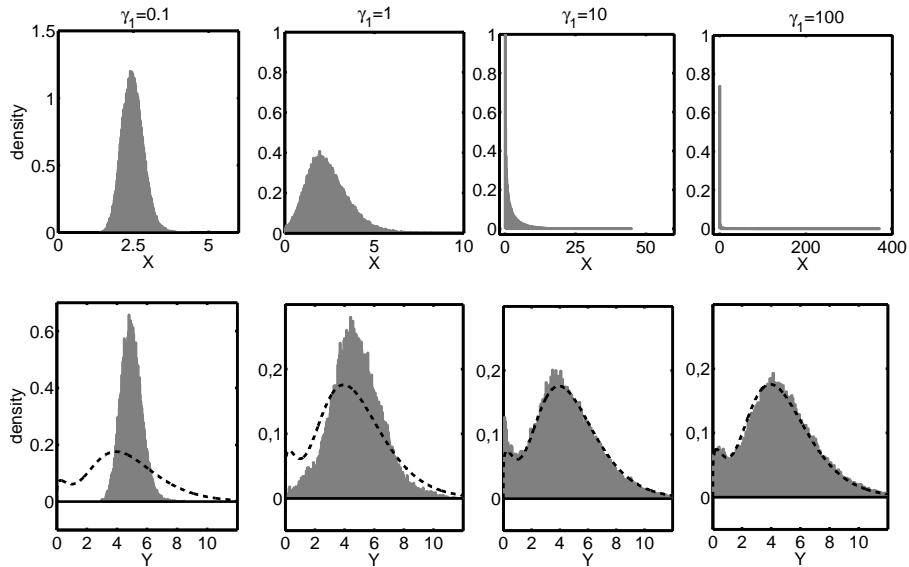}
\caption{Adiabatic reduction with the scaling (S2). Upper panels show the histograms for the first variable $X$. Bottom panels show the histograms for the second variable $Y$. Dashed lines are obtained from the one-dimensional equation \eqref{eq:den1}. Functions $\varphi(Y)$ and $h(\Delta Y)$ are given by Remark \ref{rem:hill}, and parameters used are $\varphi_0=5$, $\gamma_2=1$, $\lambda_2=2$, $K=1$, $A=4$, $B=1$, $n=4$, $b=\gamma_1/2$ and, from left to right, $\gamma_1=0.1,1,10,100$. }
\label{fig:1}
\end{figure}

\begin{figure}
\centering
  \includegraphics[width=\columnwidth]{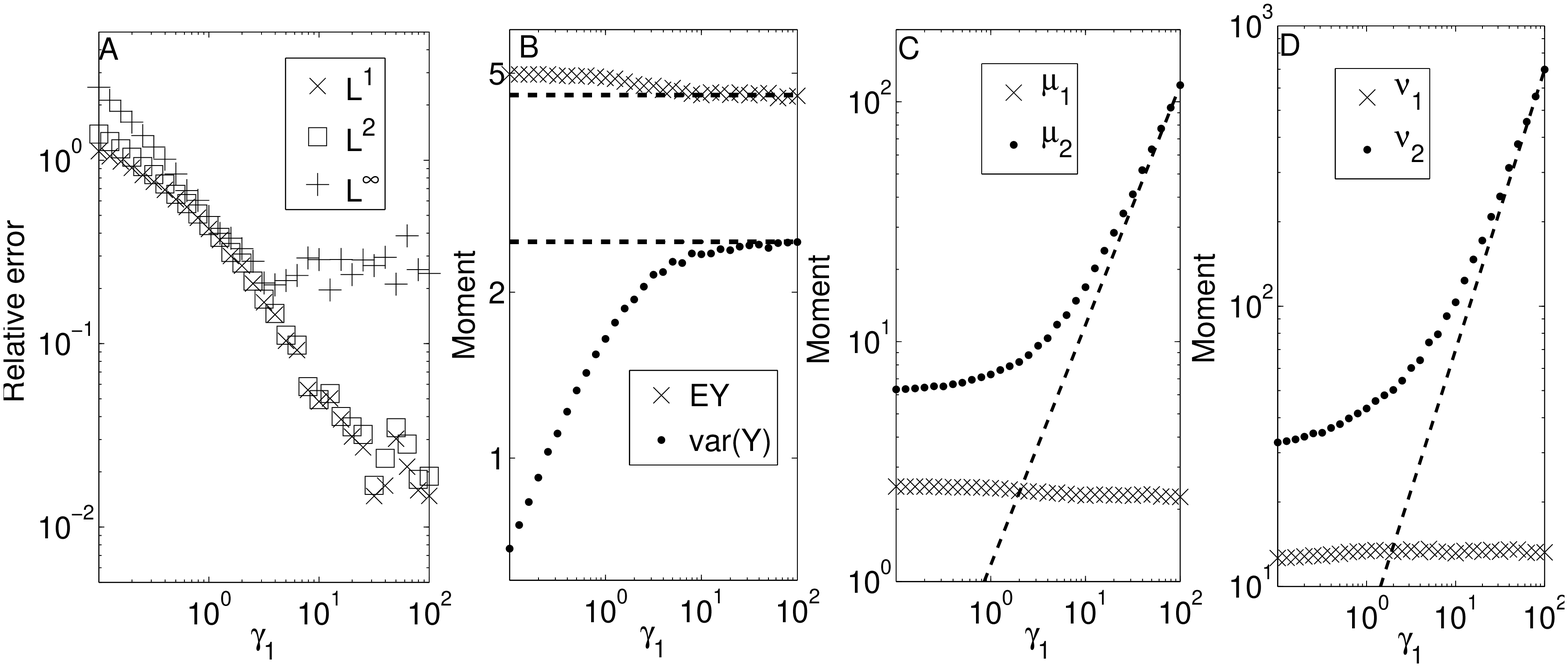}
\caption{Adiabatic reduction with the scaling (S2). (A) The norm differences between the numerical marginal density of $Y(t)$ and the analytic steady-state solution of the one-dimensional equation \eqref{eq:den1}. Results for classical $L^1,L^2$ and $L^{\infty}$ norms are shown, as indicated in the legend. (B) Asymptotic moment values of the second variable $Y$, as indicated on the legend. Dashed lines are obtained by the analytical asymptotic moment values obtained from the one-dimensional equation \eqref{eq:den1}. (C) The moments $\mu_1$ and $\mu_2$ as functions of $\gamma_1$.   (D) The moments $\nu_1$ and $\nu_2$ as functions of $\gamma_1$. In (C) and (D), the dashed lines have a slope of $+1$. Parameters used are same as in Figure \ref{fig:1}.}
\label{fig:1_err}
\end{figure}

\section{Summary}

We have considered adiabatic reduction in a model of single gene expression with auto-regulation that is mathematically described by a jump Markov process  \eqref{eq:gene1}-\eqref{eq:gene2}. If mRNA degradation is a fast process, {\it i.e.}, $\gamma_1\gg \gamma_2$, we derived reduced forms of the governing  equations under the three scaling situations so that the stationary protein level remains fixed when $\gamma_1\to\infty$: (1) If the promoter activation/deactivation is also a fast process, then the protein concentration dynamics can be approximated by a deterministic ordinary differential equation \eqref{eq:odey0}, and the mRNA concentration is approximately given by $X = b\varphi(Y)/\gamma_1$. (2) If either the transcription or the translation is a fast process, then the protein concentration dynamics can be approximated by a single stochastic differential equation with Markov jump process \eqref{eq:gene2_reduce}. We expect that these results may be generalized to justify adiabatic reduction methods in more general stochastic hybrid systems of gene regulation network dynamics.

\section*{Acknowledgments}
This work was supported by the Natural Sciences and Engineering Research Council (NSERC, Canada),  the Mathematics of Information Technology and Complex Systems (MITACS, Canada), and the National Natural Science
Foundation of China (NSFC 11272169, China), and carried out in Montr\'{e}al, Lyon and Beijing. We thank our colleague M. Tyran-Kami\'{n}ska for valuable discussions.

\bibliographystyle{spmpsci}      
\bibliography{FastSlowVar_Jan31_MCM}

\end{document}